\newtheorem{theorem}{Theorem}[section]
\newtheorem{corollary}[theorem]{Corollary}
\theoremstyle{definition}
\theoremstyle{remark}
\numberwithin{equation}{section}
\title[Betti numbers of Bresinsky's curves in $\mathbb{A}^{4}$]
{Betti numbers of Bresinsky's curves in $\mathbb{A}^{4}$}
\author{
Ranjana Mehta
\and
Joydip Saha
\and
Indranath Sengupta
}
\date{}
\address{\small \rm  Discipline of Mathematics, IIT Gandhinagar, Palaj, Gandhinagar, 
Gujarat 382355, INDIA.}
\email{ranjssj16@gmail.com}
\address{\small \rm  Discipline of Mathematics, IIT Gandhinagar, Palaj, Gandhinagar, 
Gujarat 382355, INDIA.} 
\email{saha.joydip56@gmail.com}
\address{\small \rm  Discipline of Mathematics, IIT Gandhinagar, Palaj, Gandhinagar, 
Gujarat 382355, INDIA.}
\email{indranathsg@iitgn.ac.in}
\thanks{The third author is the corresponding author.}
\date{}
\subjclass[2010]{Primary 13C40, 13P10.}
\keywords{Monomial curves, Gr\"{o}bner bases, Betti numbers}
\begin{document}

\begin{abstract}
Bresinsky defined a class of monomial curves in $\mathbb{A}^{4}$ with the property 
that the minimal number of generators or the first Betti number of the defining ideal 
is unbounded above. We prove that the same behaviour of unboundedness is true 
for all the Betti numbers and construct an explicit minimal free resolution for the defining 
ideal of this class of curves.
\end{abstract}

\maketitle

\section{Bresinsky's Examples}
Let $r\geq 3$ and $n_{1}, \ldots, n_{r}$ be positive integers with 
$\gcd (n_1,\ldots,\, n_r)=1$. Let us assume that the numbers 
$n_{1}, \ldots, n_{r}$ generate the numerical semigroup 
$$\Gamma(n_1,\ldots, n_r) = \lbrace\sum_{j=1}^{r}z_{j}n_{j}\mid z_{j}\quad \text{nonnegative \, integers}\rbrace$$ 
minimally, that is if $n_i=\sum_{j=1}^{r}z_{j}n_{j}$ for some non-negative 
integers $z_{j}$, then $z_{j}=0$ for all $j\neq i$ and $z_{i}=1$. Let 
$\eta:k[x_1,\,\ldots,\, x_r]\rightarrow k[t]$ be the mapping defined by 
$\eta(x_i)=t^{n_i},\,1\leq i\leq r$, where $k$ is a field. 
Let $\frak{p}(n_1,\ldots, n_r) = \ker (\eta)$. 
Let $\beta_{i}(\frak{p}(n_1,\ldots, n_r))$ denote the $i$-th Betti number of the ideal 
$\frak{p}(n_1,\ldots, n_r)$. Therefore, $\beta_{1}(\frak{p}(n_1,\ldots, n_r))$ denotes the 
minimal number of generators $\frak{p}(n_1,\ldots, n_r)$. For a given $r\geq 3$, 
let $\beta_{i}(r) = { \sup}(\beta_{i}(\frak{p}(n_1,\ldots, n_r)))$, 
where $\sup$ is taken over all the sequences of positive integers $n_1,\ldots, n_r$. 
Herzog \cite{herzog} proved that $\beta_{1}(3)$ is $3$ and it follows easily that 
$\beta_{2}(3)$ is a finite integer as well. Bresinsky in \cite{bre}, \cite{bre1}, \cite{bre2}, \cite{bre3}, \cite{brehoa}, 
extensively studied relations among the 
generators $n_1,\ldots, n_r$ of the numerical semigroup defined by these integers. It was proved 
in  \cite{bre1} and \cite{bre2} respectively that, for $r=4$ and for certain cases 
in $r=5$, the symmetry condition on the semigroup generated by $n_1,\ldots, n_r$ 
imposes an upper bound on the first Betti number $\beta_{1}(\frak{p}(n_1,\ldots, n_r))$. 
This remains an open question in general whether symmetry condition on the numerical 
semigroup generated by $n_1,\ldots, n_r$ imposes an upper bound on 
$\beta_{1}(\frak{p}(n_1,\ldots, n_r))$. Bresinsky \cite{bre} 
constructed a class of monomial curves in $\mathbb{A}^{4}$ to prove that 
$\beta_{1}(4)=\infty$. He used this observation to prove that 
$\beta_{1}(r)=\infty$, for every $r\geq 4$. Our aim in this article is to prove in 
Theorem \ref{mainthm} that for Bresinsky's examples $\beta_{i}(4) = \infty$, for every 
$1\leq i\leq 3$ and also describe all the syzygies explicitly in \ref{second} and 
\ref{secondsyzygy}. A similar study has been carried out by J. Herzog and D.I. Stamate 
in \cite{herstamate} and \cite{stamate}. However, the objective and approach in our study are quite 
different. The main theorem and underlying objective of our work can be found after the description 
of Bresinsky's examples.
\medskip

Let us recall Bresinsky's example of monomial curves in $\mathbb{A}^{4}$, as 
defined in \cite{bre}. Let $q_2\geq 4$ be even. $q_{1} = q_{2}+1,\, d_1=q_{2}-1$. Set 
$n_{1}=q_{1}q_{2},\, n_{2}=q_{1}d_{1},\, n_{3}=q_{1}q_{2}+d_{1},\, 
n_{4}=q_{2}d_{1}$. It is clear that $\gcd (n_1,\, n_2,\, n_3,\, n_4)=1$. For the 
rest of the article let us use the shorthand $\mathbf{\underline{n}}$ to denote 
Bresinsky's sequence of integers defined above. Bresinsky \cite{bre} proved 
that the set $A= A_{1}\cup A_{2}\cup \{g_1, g_2\}$ generates the ideal 
$\mathfrak{p}(n_1,\ldots, n_4)$, where $A_{1}=\{f_{\mu}| f_{\mu}=x_{1}^{\mu-1} x_{3}^{{q_2}-
\mu}-x_{2}^{{q_2}-\mu}x_{4}^{\mu+1},\quad 1\leq\mu\leq q_{2}\}$, 
$A_{2}=\{f| f=x_{1}^{\nu_{1}} x_{4}^{\nu_{4}}-x_{2}^{\mu_{2}}x_{3}^{\mu_{3}},\, \nu_{1},\ \mu_{3}< d_{1}\}$ and $g_{1}=x_{1}^{d_{1}}-{x_{2}}^{q_{2}}$, $g_{2}=x_{3}x_{4}-x_{2}x_{1}$. Let us first state the main theorems proved in this paper: 

\begin{theorem}\label{mainthm}
Let $S=A_{1}\cup A_{2}^{'}\cup \{g_1, g_2\}$, where $A_{2}'=\{h_{m}\mid x_{1}^{m}x_{4}^{(q_{1}-m)}-x_{2}^{(q_{2}-m)}x_{3}^{m}, 1\leq m\leq q_{2}-2\}$. 
\begin{enumerate}[(i)]
\item $S$ is a minimal generating set for the ideal $\mathfrak{p}(\mathbf{\underline{n}})$;
\smallskip

\item $S$ is a Gr\"{o}bner basis for $\mathfrak{p}(\mathbf{\underline{n}})$ 
with respect to the lexicographic monomial order induced by $x_{3}>x_{2}>x_{1}>x_{4}$ on 
$k[x_{1}, \ldots , x_{4}]$; 
\smallskip

\item $\beta_{1}(\mathfrak{p}(\mathbf{\underline{n}})) = \mid S \mid  = 2q_{2}$;
\smallskip

\item $\beta_{2}(\mathfrak{p}(\mathbf{\underline{n}}))= 4(q_{2}-1)$;
\smallskip

\item $\beta_{3}(\mathfrak{p}(\mathbf{\underline{n}}))= 2q_{2}-3$.

\item A minimal free resolution for the ideal $\mathfrak{p}(\mathbf{\underline{n}})$ over 
the polynomial ring $R = K[x_{1}, x_{2}, x_{3}, x_{4}]$ is 
$$0\longrightarrow R^{2q_{2}-3} \stackrel{P}{\longrightarrow} R^{4(q_{2} - 1)} \stackrel{N}{\longrightarrow} R^{2q_{2}} \longrightarrow R \longrightarrow R/\mathfrak{p}(\mathbf{\underline{n}})\longrightarrow 0,$$
where 
$$P=\left[\boldsymbol{\delta}_{1}\ldots \boldsymbol{\delta}_{q_{2}-2}\mid \boldsymbol{\xi}\mid \boldsymbol{\zeta}\mid \boldsymbol{\eta}\mid \boldsymbol{\kappa}_{1}\ldots \boldsymbol{\kappa}_{q_{2}-4} \right]_{4(q_{2}-1)\times 2q_{2}-3}$$ 
$$N=\left[\boldsymbol{\beta}_{1}\ldots \boldsymbol{\beta}_{q_{2}-1} \mid \boldsymbol{\gamma}^{'}_{1}\ldots\boldsymbol{\gamma}^{'}_{q_{2}-3}\mid \boldsymbol{\alpha}^{'}\mid \boldsymbol{\beta}^{'}_{2}\ldots \boldsymbol{\beta}^{'}_{q_{2}-2}\mid\boldsymbol{\alpha}_{1}\ldots \boldsymbol{\alpha}_{q_{2}-1}  \mid -\boldsymbol{\gamma}\mid \boldsymbol{\beta}^{'}\mid \boldsymbol{\gamma}^{'} \right]_{2q_{2}\times 4(q_{2}-1)}.$$
\end{enumerate}
\end{theorem}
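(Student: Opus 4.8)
The plan is to derive all six assertions from two structural inputs: the fact that $S$ is a Gröbner basis, and the claim that the overlap syzygies coming from the Gröbner structure are exhaustive. I would first record the leading monomials under the order $x_{3}>x_{2}>x_{1}>x_{4}$, namely $\LT(f_{\mu})=x_{1}^{\mu-1}x_{3}^{q_{2}-\mu}$ for $1\le\mu\le q_{2}-1$ and $\LT(f_{q_{2}})=x_{1}^{q_{2}-1}$, together with $\LT(h_{m})=x_{2}^{q_{2}-m}x_{3}^{m}$, $\LT(g_{1})=x_{2}^{q_{2}}$, and $\LT(g_{2})=x_{3}x_{4}$. Since every element of $S$ is a binomial visibly annihilated by $\eta$, we have $S\subseteq\mathfrak{p}(\mathbf{\underline{n}})$. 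To prove (ii) I would run Buchberger's criterion: only S-polynomials whose leading monomials share a variable need checking, and because these leading terms are sparse monomials the relevant reductions are short, governed by the congruence $x_{3}x_{4}\equiv x_{1}x_{2}$ from $g_{2}$ and by $g_{1}$. Once $S$ is a Gröbner basis of $\langle S\rangle$, I would reduce each member of Bresinsky's set $A$ to zero modulo $S$, giving $\mathfrak{p}(\mathbf{\underline{n}})=\langle A\rangle\subseteq\langle S\rangle\subseteq\mathfrak{p}(\mathbf{\underline{n}})$ and hence equality, which establishes (ii). For (i) and (iii) I would pass to the fine grading by the semigroup $\Gamma(\mathbf{\underline{n}})$ and compute $\beta_{1}$ as $\dim_{k}\bigl(\mathfrak{p}/\mathfrak{m}\mathfrak{p}\bigr)$ degree by degree: each of the $2q_{2}$ binomials carries a specific multidegree (its $\eta$-degree), and one checks on each graded piece that the corresponding binomials are linearly independent modulo $\mathfrak{m}\mathfrak{p}$, so none can be omitted.

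Next I would produce the maps $N$ and $P$. Schreyer's theorem attaches to the Gröbner basis $S$ a free resolution whose first syzygies are indexed by the overlapping pairs of leading monomials; writing these S-polynomial relations down and reducing to a minimal set yields exactly the $4(q_{2}-1)$ columns $\boldsymbol{\beta}_{i},\boldsymbol{\gamma}'_{i},\boldsymbol{\alpha}',\boldsymbol{\beta}'_{i},\boldsymbol{\alpha}_{i},\boldsymbol{\gamma},\boldsymbol{\beta}',\boldsymbol{\gamma}'$ assembled into $N$, and I would verify directly that $N$ annihilates the row vector of the $2q_{2}$ generators, so the columns are genuine first syzygies. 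In the same way the columns $\boldsymbol{\delta}_{i},\boldsymbol{\xi},\boldsymbol{\zeta},\boldsymbol{\eta},\boldsymbol{\kappa}_{i}$ of $P$ are the relations among the columns of $N$, and one checks $NP=0$ by direct multiplication. These explicit verifications are exactly the content of the referenced subsections \ref{second} and \ref{secondsyzygy}.

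To prove exactness and minimality I would argue by ranks and the Buchsbaum--Eisenbud acyclicity criterion. The free ranks are $1,\,2q_{2},\,4(q_{2}-1),\,2q_{2}-3$, whose alternating sum is $0$, consistent with $\operatorname{rank}\bigl(R/\mathfrak{p}(\mathbf{\underline{n}})\bigr)=0$; the expected ranks of the three differentials are $r_{1}=1$, $r_{2}=2q_{2}-1$, and $r_{3}=2q_{2}-3$, the last being full column rank so that $P$ is injective. It then suffices to check $\operatorname{grade} I_{r_{k}}(\phi_{k})\ge k$ for $k=1,2,3$. For $k=1$ the determinantal ideal $I_{1}(\phi_{1})$ is generated by the entries of the map $R^{2q_{2}}\to R$, hence equals $\mathfrak{p}(\mathbf{\underline{n}})$, of grade $3\ge 1$. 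For $k=2,3$ the required bounds $\ge 2$ and $\ge 3$ follow because $R/\mathfrak{p}(\mathbf{\underline{n}})$ is Cohen--Macaulay of codimension $3$ (a monomial curve has dimension one), so $\operatorname{pd}R/\mathfrak{p}(\mathbf{\underline{n}})=3$ and the minor ideals attain the expected codimension. Minimality is then automatic, since every entry of $N$ and $P$ lies in $\mathfrak{m}$: each column is a nontrivial relation among forms of positive degree. Reading off the free ranks yields $\beta_{1}=2q_{2}$, $\beta_{2}=4(q_{2}-1)$, and $\beta_{3}=2q_{2}-3$, proving (iii)--(vi).

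The main obstacle is not any single conceptual step but the bookkeeping behind two claims: first, that the Schreyer S-polynomial syzygies collapse to exactly the listed $4(q_{2}-1)$ minimal first syzygies with no others, and likewise that the columns of $P$ exhaust $\ker N$; and second, that the grade conditions hold for the large, combinatorially patterned minor ideals $I_{2q_{2}-1}(N)$ and $I_{2q_{2}-3}(P)$. It is precisely the need to control these structured matrices uniformly in $q_{2}$ that forces the explicit, case-by-case description of the columns in the subsequent subsections, and where the argument must be carried out with care.
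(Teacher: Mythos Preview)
Your overall architecture matches the paper: verify Buchberger's criterion case by case to obtain (ii), use Schreyer's construction to list first syzygies, and then assemble the matrices $N$ and $P$. Where you diverge is in the crucial step of pinning down $\beta_{2}$. The paper does \emph{not} prove $\beta_{2}=4(q_{2}-1)$ internally: it shows that the subset $\mathbb{T}$ of Schreyer tuples (of cardinality $4(q_{2}-1)$) is linearly independent in $M_{1}/\mathfrak{m}M_{1}$, giving $\beta_{2}\ge 4(q_{2}-1)$, and then \emph{cites} Stamate's computation (Theorem~8.1 in \cite{stamate}) for the reverse inequality. After that, $\beta_{3}$ follows from Auslander--Buchsbaum and the alternating sum, and the minimality of the second syzygies is again a linear-independence check matched against the already-known $\beta_{3}$.

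Your alternative via the Buchsbaum--Eisenbud acyclicity criterion is a reasonable idea, but as written it contains a circularity. You argue that the grade bounds $\operatorname{grade} I_{r_{k}}(\phi_{k})\ge k$ for $k=2,3$ ``follow because $R/\mathfrak{p}(\mathbf{\underline{n}})$ is Cohen--Macaulay of codimension~$3$, so $\operatorname{pd}=3$ and the minor ideals attain the expected codimension.'' This does not follow: knowing that \emph{some} minimal free resolution of length $3$ exists says nothing about the minor ideals of \emph{this particular} complex $N,P$ until you already know it is a resolution. Buchsbaum--Eisenbud in the forward direction requires you to exhibit, for each $k$, an explicit regular sequence of length $k$ inside $I_{r_{k}}(\phi_{k})$, i.e.\ inside $I_{2q_{2}-1}(N)$ and $I_{2q_{2}-3}(P)$. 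That is a genuine computation on the patterned minors, uniform in $q_{2}$, and you have not supplied it. Equivalently, your claim that the full Schreyer set $\mathbb{D}$ ``collapses to exactly the listed $4(q_{2}-1)$'' columns is asserted rather than proved; the paper sidesteps exactly this difficulty by importing $\beta_{2}$ from \cite{stamate}. If you want a self-contained argument along your lines, you must either carry out the grade computations on $N$ and $P$ directly, or show explicitly that each element of $\mathbb{D}\setminus\mathbb{T}$ lies in the $R$-span of $\mathbb{T}$.
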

\medskip

The proof of the theorem is divided into various lemmas, theorems and corollaries in sections 2, 3 and 4. 
We first prove that a special subset of binomials form a minimal generating set as well as a Gr\"{o}bner basis 
for the ideal $\mathfrak{p}(\mathbf{\underline{n}})$ with respect to a suitable monomial order; see parts (i) and (ii) 
of Theorem \ref{mainthm}. We then compute the syzygy modules using this Gr\"{o}bner basis explicitly and 
minimally in \ref{second} and \ref{secondsyzygy}. We have not only computed all the total Betti numbers but 
also have written a minimal free resolution explicitly; see parts (iii) - (vi) in Theorem \ref{mainthm} and 
\ref{minres}. However, in order to determine the minimality of the first syzygy module, we have used the 
second Betti number for these ideals calculated in \cite{stamate}. It should be mentioned here that a minimal generating set of binomials for  $\mathfrak{p}(\mathbf{\underline{n}})$ has also been calculated in 
\cite{herstamate}. The authors have also computed a minimal standard basis in \cite{herstamate} and 
that has been used to calculate the Betti numbers in \cite{stamate}. While the description of the tangent 
cone in \cite{herstamate} has been used to compute the Betti numbers in \cite{stamate}, we on the other 
hand have imitated Bresinsky's approach and studied the generators of the ideal 
$\mathfrak{p}(\mathbf{\underline{n}})$ and its syzygies, leading to a complete description of a minimal 
free resolution of the defining ideal. 
\medskip

This work grew out in an attempt to understand and generalize Bresinsky's construction 
of the numerical semigroups in arbitrary embedding dimension. What is certainly interesting 
is that $n_{1}+n_{2} = n_{3}+n_{4}$, for the sequence of integers 
$\mathbf{\underline{n}} = (n_{1}, n_{2}, n_{3}, n_{4})$ given by Bresinsky. We have 
initiated a study of numerical semigroups defined by a sequence of integers formed 
by concatenation of two arithmetic sequences and we believe that such semigroups with correct 
conditions would finally give us a good model of numerical semigroups in arbitrary embedding 
dimension with unbounded Betti numbers; see \cite{mss1}, \cite{mss3}.
\medskip

\section{The first and the second Betti numbers}
A minimal generating set for the ideal $\mathfrak{p}(\mathbf{\underline{n}})$ has 
also been constructed in \cite{herstamate}. Our construction differs from that in 
\cite{herstamate} in one binomial. The main idea is to identify the set 
$A_{2}'\subset A_{2}$, defined in the statement of \ref{mainthm}, in order 
to extract a minimal generating set out of the generating set 
constructed by Bresinsky \cite{bre}. One can show that set 
$S=A_{1}\cup\{g_{1},g_{2}\}\cup A_{2}^{'}$ is a minimal generating 
for the ideal $\mathfrak{p}(\mathbf{\underline{n}})$. Therefore, 
$\beta_{1}(\mathfrak{p}(\mathbf{\underline{n}})) = \mid S\mid =2q_{2}$.
\medskip

What is really interesting is that the set $S$, minimaly generating the ideal 
$\mathfrak{p}(\mathbf{\underline{n}})$ is also a Gr\"{o}bner basis. Therefore, the Schreyer 
tuples generate the syzygy module. The following theorem proves these facts and 
finally we calculate the second Betti number by extracting a minimal generating 
set for the syzygy module.
\medskip

\begin{theorem}\label{gbasis}
Consider the lexicographic monomial order induced by 
$x_{3}> x_{2}>x_{1}>x_{4}$ in $k[x_{1},x_{2},x_{3},x_{4}]$. Then, 
\begin{enumerate}[(i)]
\item The set $S$ forms Gr\"{o}bner basis for the ideal $\mathfrak{p}(\mathbf{\underline{n}})$ with respect to the above monomial order.

\item Let $\mathbb{D}$ denote the set of all Schreyer tuples obtained from 
the Gr\"{o}bner basis $S$, which generate the first syzygy module 
(see Theorem 1.43 \cite{dl}). Then each entry in the elements of 
$\mathbb{D}$ is either a non-constant polynomial or zero.
\end{enumerate}
\end{theorem}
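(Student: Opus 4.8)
The plan is to settle (i) by Buchberger's criterion and then read (ii) off the resulting Schreyer syzygies, using the weighting $\deg x_i = n_i$ to control degrees. First I would record the leading terms under the order $x_3 > x_2 > x_1 > x_4$, namely $\LT(f_\mu) = x_1^{\mu-1}x_3^{q_2-\mu}$, $\LT(h_m) = x_2^{q_2-m}x_3^m$, $\LT(g_1) = x_2^{q_2}$ and $\LT(g_2) = x_3x_4$. A preliminary lemma I would establish is that no one of these monomials divides another: within the $f$-family the exponent of $x_1$ increases while that of $x_3$ decreases (and symmetrically for the $h$-family in $x_2,x_3$), $g_2$ is the only leading term involving $x_4$, and the index ranges $1\le\mu\le q_2$, $1\le m\le q_2-2$ keep both families away from $x_2^{q_2}$. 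This lemma is used in both parts.

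For (i) I would apply Buchberger's criterion together with the product criterion. The pairs whose leading terms are coprime --- among them $(f_\mu,g_1)$ for every $\mu$ and $(g_1,g_2)$ --- reduce to zero automatically and may be set aside. The remaining overlaps fall into the families $(f_\mu,f_{\mu'})$, $(f_\mu,h_m)$, $(f_\mu,g_2)$, $(h_m,h_{m'})$, $(h_m,g_1)$ and $(h_m,g_2)$, and for these I would compute the S-polynomials directly. The computation is made uniform by the fact that each such S-polynomial collapses to a single monomial multiple of one generator; for instance a short expansion gives $S(f_\mu,f_{\mu+1}) = x_2^{q_2-\mu-1}x_4^{\mu+1}\,g_2$, which reduces to $0$ in one step. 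I would organize the pairs by the gap between their indices and verify that the same collapse persists, so that every S-polynomial acquires a standard representation and $S$ is a Gr\"obner basis.

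For (ii) I would use the Schreyer description (Theorem 1.43 of \cite{dl}): with $\tau_{ij} = \lcm(\LT g_i,\LT g_j)$ and a standard expression $S(g_i,g_j) = \sum_k a_k g_k$, the tuple is $(\tau_{ij}/\LT g_i)\,e_i - c\,(\tau_{ij}/\LT g_j)\,e_j - \sum_k a_k e_k$. The cleanest route is through homogeneity: every generator is homogeneous for $\deg x_i = n_i$, hence each Schreyer tuple is homogeneous of degree $\deg\tau_{ij}$ and its entry at position $k$ is homogeneous of degree $\deg\tau_{ij}-\deg g_k$. For the two pivot positions the no-divisibility lemma makes $\tau_{ij}$ a proper multiple of $\LT g_i$ and of $\LT g_j$, so $\deg\tau_{ij} > \deg g_i,\deg g_j$ strictly and these entries are non-constant. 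For the remaining positions an entry can be a nonzero constant only if $\deg g_k = \deg\tau_{ij}$, that is, only if some reduction step divides a bare leading term rather than a proper multiple; the explicit reductions from (i) exhibit each nonzero $a_k$ as a monomial of strictly positive degree (in the example above the coefficient $x_2^{q_2-\mu-1}x_4^{\mu+1}$ carries $x_4$-degree $\mu+1\ge 1$), which rules this out. Hence no entry of a Schreyer tuple is a nonzero constant.

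The main obstacle is the volume and uniformity of the S-polynomial reductions in (i): there are on the order of $q_2^2$ overlapping pairs spread over six families, and the real work is to organize them --- by index gap and by type --- so that each visibly collapses to a monomial multiple of a single generator. Once that is in place, (ii) reduces to degree bookkeeping, the only delicate point being the confirmation that no reduction step divides a bare leading term $\LT(g_k)$ with $\deg g_k = \deg\tau_{ij}$; the explicit monomial form of the reductions supplies exactly this.
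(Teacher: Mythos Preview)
Your overall strategy---Buchberger's criterion for (i), then reading the Schreyer tuples off the standard expressions with a homogeneity check for (ii)---is the same as the paper's, and the degree argument you sketch for (ii) is a nice way to package what the paper does by brute inspection. The problem is that your key simplifying claim in (i) is false.

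You assert that every overlapping S-polynomial ``collapses to a single monomial multiple of one generator'', illustrated by $S(f_\mu,f_{\mu+1})=x_2^{\,q_2-\mu-1}x_4^{\,\mu+1}g_2$. This is correct for the $(f_\mu,f_{\mu'})$, $(h_m,h_{m'})$, $(h_m,g_1)$ and most $(f_\mu,g_2)$ pairs, but it breaks down for the mixed pairs $(f_\mu,h_m)$. For example, when $\mu+m<q_2$ one finds
\[
S(f_\mu,h_m)=x_4^{\,q_2+1-m}f_{\mu+m}\;-\;x_2^{\,q_2-\mu-m}x_4^{\,\mu+1}f_{q_2}\;+\;x_2^{\,q_2-\mu-m}x_4^{\,\mu+1}g_1,
\]
a combination of \emph{three} generators; the cases $\mu+m=q_2$ and $\mu+m>q_2$ behave differently again, and the pairs $(f_{q_2},g_2)$, $(g_2,h_1)$, $(f_1,h_{q_2-2})$ also need more than one generator in their standard expressions. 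So the mixed cases genuinely require a three-way case split governed by the sign of $\mu+m-q_2$, not merely by the index gap, and each subcase has to be written out. Without these computations your part (i) is not established, and since your argument for (ii) ultimately appeals to ``the explicit reductions from (i)'' to rule out constant coefficients, (ii) is left unsupported as well. A minor additional point: you invoke the product criterion to discard the pairs $(f_\mu,g_1)$ and $(g_1,g_2)$ for (i), but the statement of (ii) concerns the set of \emph{all} Schreyer tuples, so you should still record (or argue separately about) those syzygies; they are the Koszul relations $g_j e_i-g_i e_j$, whose entries are visibly non-constant.
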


\begin{proof}
We first order the set $S=A_{1}\cup\{g_{1},g_{2}\}\cup A_{2}^{'}$ as follows:
$$(f_{1},\ldots,f_{q_{2}},g_{1},g_{2},h_{1},\ldots,h_{q_{2}-2}).$$ 
Let $f,g\in S$. We consider the $S$-polynomials $S(f,g)$ and divide the proof into cases 
based on the sets $f$ and $g$ belonging to. 
\smallskip

\noindent\textbf{Case 1.} $\mathbf{f,g\in A_{1}}$.
\smallskip

\noindent\textbf{1(a).}  $f=f_{\mu},\, g=f_{\mu+1}$ where, $1\leq \mu \leq q_{2}-1$. We have 
\begin{align*}
S(f_{\mu},f_{\mu+1}) & = x_{1}\cdot f_{\mu}-x_{3}\cdot f_{\mu+1}\\
{} & = (x_{2}^{q_{2}-1-\mu}x_{4}^{\mu+1})\cdot g_{2}\longrightarrow_{S} 0.
\end{align*}
\noindent Therefore, the set
$$\mathbb{T}_{1}=\{\boldsymbol{\beta}_{\mu}=(\beta_{(\mu, 1)},\cdots, \beta_{(\mu, 2q_{2})})\mid 1\leq \mu\leq q_{2}-1\}$$ 
\noindent gives the Schreyer tuples, where
\begin{eqnarray*}
\beta_{(\mu, \mu)} & = & -x_{1};\\
\beta_{(\mu, \mu+1)} & = & x_{3};\\
\beta_{(\mu, q_{2}+2)} & = & x_{2}^{q_{2}-(\mu+1)}x_{4}^{\mu+1};\\
\beta_{(\mu, i)} & = & 0,\, \text{for}\,  i\notin\{ \mu,\mu+1,q_{2}+2\}.
\end{eqnarray*}
\smallskip

\noindent \textbf{1(b).} $f=f_{\mu},\, g=f_{\mu '} $ where, $ \mu ' > \mu+1$. We have \begin{align*}
S(f_{\mu},f_{\mu '})&={x_{1}}^{\mu ' -\mu}\cdot f_{\mu}-x_{3}^{\mu '-\mu}\cdot f_{\mu '}\\
&=x_{2}^{q_{2}-\mu '}x_{4}^{\mu+1}((x_{3}x_{4})^{\mu '-\mu-1}+(x_{3}x_{4})^{\mu '-\mu-2}(x_{1}x_{2})+\cdots +(x_{1}x_{2})^{\mu '-\mu-1})\cdot g_{2}\\
{} & \longrightarrow_{S} 0 
\end{align*}
\noindent Therefore, the set 
$$\mathbb{T}_{2}=\{\boldsymbol{\gamma}_{\mu \mu '}=(\gamma_{(\mu\mu ', 1)},\ldots , \gamma_{(\mu\mu ', i)}, \ldots , \gamma_{(\mu\mu ', 2q_{2})})| 1\leq \mu\leq q_{2}-2, \, \mu+2 \leq \mu ' \leq q_{2}\}$$
\noindent gives the Schreyer tuples, where 
\begin{eqnarray*}
\gamma_{(\mu\mu ',\mu )}& = & -{x_{1}}^{(\mu '- \mu)};\\
\gamma_{(\mu\mu ',\mu ' )} & = & {x_{3}}^{(\mu '- \mu)};\\
\gamma_{(\mu\mu ',q_{2}+2)}& = & x_{2}^{q_{2}-\mu '}x_{4}^{\mu+1}((x_{3}x_{4})^{\mu '-\mu-1}+(x_{3}x_{4})^{\mu '-\mu-2}(x_{1}x_{2})+\cdots +(x_{1}x_{2})^{\mu '-\mu-1});\\
\gamma_{(\mu\mu ',i)}& = & 0 \quad \text{for}\quad  i\notin\{\mu, \mu ', (q_{2}+2)\}.
\end{eqnarray*}
\smallskip

\noindent \textbf{Case 2.}  $\mathbf{f\in A_{1},\, g\in\{g_{1},g_{2}\}}$
\smallskip

\noindent \textbf{2(a).} Let $f=f_{\mu},\, g=g_{1}$, where $1\leq \mu \leq q_{2}$.
\noindent We have
\begin{align*}
S(f_{\mu},\, g_{1})&=x_{2}^{q_{2}}\cdot f_{\mu}+x_{1}^{\mu-1}x_{3}^{q_{2}-\mu}\cdot g_{1}\\
&=x_{1}^{q_{2}-1}\cdot f_{\mu}+x_{2}^{q_{2}-\mu}x_{4}^{\mu+1}\cdot g_{1}\quad \longrightarrow_{S} 0
\end{align*}
\noindent Therefore, the set 
$$\mathbb{T}_{3}=\{\boldsymbol{\gamma}_{\mu}: \gamma_{\mu}=(\gamma_{(\mu, 1)},\ldots,\gamma_{(\mu,i)},\ldots, \gamma_{(\mu,2q_{2})})\mid 1\leq \mu\leq q_{2}-1\}$$
gives us the Schreyer tuples, where
\begin{eqnarray*}
\gamma_{(\mu,\mu)}& = & x_{1}^{q_{2}-1}-x_{2}^{q_{2}};\\
\gamma_{(\mu, q_{2}+1)} & = & x_{2}^{q_{2}-\mu}x_{4}^{\mu+1}-x_{1}^{\mu-1}x_{3}^{q_{2}-\mu};\\
\gamma_(\mu,i) & = & 0,\quad \text{for}\quad  i\notin \{ \mu, (q_{2}+1)\},\, 1\leq i\leq 2q_{2}.
\end{eqnarray*}
\smallskip

\noindent \textbf{2(b).} Let $f=f_{\mu},\, g=g_{2}$, where $1\leq \mu \leq q_{2}-1$.
We have 
\begin{align*}
S(f_{\mu},\, g_{2})&= x_{4}\cdot f_{\mu}-x_{1}^{\mu-1}x_{3}^{q_{2}-(\mu+1)}\cdot g_{2}\\
&=x_{2}\cdot f_{\mu+1} \quad \longrightarrow_{S} 0
\end{align*}
\noindent Therefore, the set
$$\mathbb{T}_{4}=\{\boldsymbol{\alpha}_{\mu}=(\alpha_{(\mu, 1)},\ldots,\alpha_{(\mu,i)},\ldots, \alpha_{(\mu,2q_{2})})\mid 1\leq \mu \leq q_{2}-1\}$$
\noindent gives the Schreyer tuples, where
 \begin{eqnarray*}
\alpha_{(\mu,\mu)}& = & -x_{4};\\
\alpha_{(\mu,\mu +1)} & = & x_{2};\\
\alpha_{(\mu, q_{2}+2)} & = & x_{1}^{\mu-1}x_{3}^{q_{2}-(\mu+1)};\\
\alpha_{(\mu,i)} & = & 0,\quad \text{for}\quad  i\notin \{\mu,\, \mu+1,\, (q_{2}+2)\},\, 1\leq i\leq 2q_{2}. 
\end{eqnarray*}
\smallskip

\noindent\textbf{2(c).} Let $f=f_{q_{2}},\, g=g_{2}$. We have
\begin{align*}
S(f_{q_{2}}, g_{2})&=x_{3}x_{4}\cdot f_{q_{2}}-x_{1}^{q_{2}-1}\cdot g_{2}\\
&=x_{1}x_{2}\cdot f_{q_{2}}-{x_{4}^{q_{2}+1}}\cdot g_{2} \quad \longrightarrow_{S} 0
\end{align*}
\noindent Therefore, the set 
$$\mathbb{T}_{5}=\{\boldsymbol{\alpha}=(\alpha_{1},\ldots,\alpha_{i},\ldots, \alpha _{ 2q_{2}})\}$$ 
\noindent gives the Schreyer tuples, where
\begin{eqnarray*}
\alpha_{q_{2}} & = & x_{1}x_{2}-x_{3}x_{4};\\
\alpha_{(q_{2}+2)} & = & x_{1}^{q_{2}-1}-x_{4}^{q_{2}+1};\\
\alpha_{i} & = & 0,\quad \text{for}\quad  i\notin \{q_{2},\,(q_{2}+2)\},\, 1\leq i\leq 2q_{2}.
\end{eqnarray*}
\smallskip

\noindent\textbf{Case 3.} Let $\mathbf{f=g_{1}, g=g_{2}}$. We have
\begin{align*}
S(g_{1},g_{2})&=-x_{3}x_{4}\cdot g_{1}-x_{2}^{q_{2}}\cdot g_{2}\\
&=-x_{1}x_{2}\cdot g_{1}-x_{1}^{q_{2}-1}\cdot g_{2} \quad \longrightarrow_{S} 0
\end{align*}
\noindent Therefore, the set 
$$\mathbb{T}_{6}=\{\boldsymbol{\beta}=(\beta_{1},\ldots,\beta_{i},\ldots, \beta_{ 2q_{2}})\}$$
gives the Schreyer tuples, where 
\begin{eqnarray*}
\beta_{q_{2}+1} & = & x_{3}x_{4}-x_{1}x_{2};\\ 
\beta_{q_{2}+2} & = & x_{2}^{q_{2}}-x_{1}^{q_{2}-1};\\
\beta_{i} & = & 0,\quad \text{for}\quad  i\notin \{(q_{2}+1),\, (q_{2}+2)\},\, 1\leq i\leq 2q_{2}.
\end{eqnarray*}
\smallskip

\noindent\textbf{Case 4.} $\mathbf{f\in \{g_{1},g_{2}\}, g\in A_{2}^{'}}$. 
\smallskip

\noindent\textbf{4(a).} Let $f=g_{1},\, g=h_{1}$. We have
\begin{align*}
S(g_{1},\, h_{1})&=-x_{3}\cdot g_{1}+x_{2}\cdot h_{1}\\
&=-x_{1}\cdot f_{q_{2}-1}\quad \longrightarrow_{S} 0
\end{align*}
\noindent Therefore, the set
$$\mathbb{T}_{7}=
\{\boldsymbol{\gamma}=(\gamma_{1},\ldots,\gamma_{i},\ldots, \gamma_{2q_{2}})\}$$ 
gives us the Schreyer tuples, where
\begin{eqnarray*}
\gamma_{q_{2}-1} & = & -x_{1};\\
\gamma_{q_{2}+1} & = & x_{3};\\
\gamma_{q_{2}+3} & = & -x_{2};\\ 
\gamma_{i} & = & 0,\quad \text{for}\quad  i\notin \{(q_{2}-1),\, (q_{2}+1),\,(q_{2}+3)\}.
\end{eqnarray*}
\smallskip

\noindent\textbf{4(b).} Let $f=g_{1}, g=h_{m}$, with $1< m\leq (q_{2}-2)$. We have
\begin{align*}
S(g_{1}, h_{m})&=-{x_{3}}^{m}\cdot g_{1}+{x_{2}}^{m}\cdot h_{m}\\
&=-{x_{1}}^{m}\cdot f_{q_{2}-m}\quad \longrightarrow_{S} 0
\end{align*}
\noindent Therefore, the set
$$\mathbb{T}_{8}=\{\boldsymbol{\alpha}'_{m}=(\alpha'_{(m,1)},\ldots,\alpha'_{(m,i)},\ldots, \alpha'_{(m,2q_{2})}|1< m\leq q_{2}-2\}$$
gives us the Schreyer tuples, where
\begin{eqnarray*}
\alpha'_{(m,q_{2}-m)} & = & -x_{1}^{m};\\
\alpha'_{(m,q_{2}+2)} & = & x_{3}^{m};\\ 
\alpha'_{(m,q_{2}+2+m)} & = & -x_{2}^{m};\\ 
\alpha'_{(m,i)} & = & 0,\quad \text{for}\quad  i\notin \{q_{2}-m, q_{2}+1, q_{2}+2+m\}.
\end{eqnarray*}
\smallskip

\noindent\textbf{4(c).} Let $f=g_{2}, g=h_{1}$. We have 
\begin{align*}
S(g_{2},h_{1})&=x_{2}^{q_{2}-1}\cdot g_{2}+x_{4}\cdot h_{1}\\
&=x_{1}\cdot g_{1}-x_{1}\cdot f_{q_{2}} \quad \longrightarrow_{S} 0
\end{align*}
\noindent Therefore, the set 
$$\mathbb{T}_{9}=\{\boldsymbol{\alpha}'=(\alpha'_{1},\ldots, \alpha'_{i},\ldots , \alpha'_{2q_{2}})\}$$
gives us the Schreyer tuples, where
\begin{eqnarray*} 
\alpha'_{q_{2}} & = & -x_{1};\\
\alpha'_{(q_{2}+1)} & = & x_{1};\\ 
\alpha'_{(q_{2}+2)} & = & -x_{2}^{(q_{2}-1)};\\ 
\alpha'_{(q_{2}+3)} & = & -x_{4};\\ 
\alpha'_{i} & = & 0, \quad \text{for}\quad  i\notin \{ q_{2},\,q_{2}+1,\,q_{2}+2,\,q_{2}+3\}.
\end{eqnarray*}
\medskip

\noindent\textbf{4(d).} Let $f=g_{2}, g=h_{m},$ where $1< m \leq q_{2}-2.$ We have
\begin{align*}
S(g_{2},h_{m})&=x_{2}^{q_{2}-m}x_{3}^{m-1}\cdot g_{2}-x_{4}\cdot h_{m}\\
&=-x_{1}\cdot h_{m-1} \quad \longrightarrow_{S} 0
\end{align*}
\noindent Therefore, the set
$$\mathbb{T}_{10}= \{\boldsymbol{\beta}'_{m}=(\beta'_{(m,1)},\ldots,\beta'_{(m,i)},\ldots, \beta'_{(m,2q_{2})})|1< m\leq q_{2}-2)\}$$
gives us the Schreyer tuples, where
\begin{eqnarray*}
\beta'_{(m,q_{2}+2)} & = & x_{2}^{q_{2}-m}x_{3}^{m-1};\\
\beta'_{(m,q_{2}+m+1)} & = & -x_{1};\\
\beta'_{(m,q_{2}+2+m)} & = & x_{4};\\
\beta'_{(m,i)} & = & 0,\quad \text{for}\quad  i\notin \{(q_{2}+2), (q_{2}+m+1), (q_{2}+2+m)\}.
\end{eqnarray*}
\medskip

\noindent\textbf{Case 5.} $\mathbf{f, g\in A_{2}^{'}}$
\smallskip

\noindent\textbf{5(a)} Let $f=h_{m}$ and $g=h_{m+1},$ where $1\leq m \leq q_{2}-3$. We have 
\begin{align*}
S(h_{m},h_{m+1})&= -x_{3}\cdot h_{m}+x_{2}\cdot h_{m+1}\\
&=x_{1}^{m}x_{4}^{q_{2}-m}\cdot g_{2}\quad \longrightarrow_{S} 0
\end{align*}
\noindent Therefore the set
$$\mathbb{T}_{11}=\{{\boldsymbol{\gamma}'_{m}}=(\gamma'_{(m, 1)},\ldots,\gamma'_{(m,i)},\ldots, \gamma'_{(m, 2q_{2})})\mid 1\leq m\leq q_{2}-3\}$$
gives us the Schreyer tuples, where
\begin{eqnarray*}
\gamma'_{(m, q_{2}+2)} & = & -x_{1}^{m}x_{4}^{q_{2}-m};\\
\gamma'_{(m,q_{2}+2+m)} & = & x_{3};\\
\gamma'_{(m ,q_{2}+3+m)} & = & -x_{2};\\
\gamma'_{(m, i)} & = & 0,\quad \text{for}\quad i\notin \{q_{2}+2, q_{2}+m+2,q_{2}+m+3\}.
\end{eqnarray*}
\medskip

\noindent\textbf{5(b).} Let $f=h_{m}$ and $g=h_{m'}$ where $m'> m+1.$ We have
\begin{align*}
S(h_{m}, h_{m'})&=-x_{2}^{m'-m}\cdot h_{m}+x_{3}^{m'-m}\cdot h_{m'}\\
&=x_{1}^{m}x_{4}^{q_{2}+1-m'}((x_{3}x_{4})^{m'-m-1}+(x_{3}x_{4})^{m'-m-2}(x_{1}x_{2})+\cdots+(x_{1}x_{2})^{m-m'-1})\cdot g_{2}\\
{} & \longrightarrow_{S} 0 
\end{align*}
\noindent Therefore, the set
\begin{align*}
\mathbb{T}_{12}=\{{\boldsymbol{\alpha}'_{m m'}}=(\alpha'_{(m m', 1)},\ldots,\alpha'_{(m m',i)},\ldots, \alpha'_{(m m', 2q_{2})})\mid & 1\leq m\leq q_{2}-4,\\
& m+2\leq m'\leq q_{2}-2\}
\end{align*}
gives the Schreyer tuples, where
\begin{eqnarray*}
\alpha'_{(mm',m )} & = & x_{3}^{m'-m};\\
\alpha'_{(mm', m')} & = & -x_{2}^{m'-m};\\ 
\alpha'_{(mm',q_{2}+2)} & = & x_{1}^{m}x_{4}^{q_{2}+1-m'}((x_{3}x_{4})^{m'-m-1}+(x_{3}x_{4})^{m'-m-2}(x_{1}x_{2})+\cdots+(x_{1}x_{2})^{m-m'-1});\\
\alpha'_{(mm', i)} & = & 0,\quad \text{for}\quad  i\notin \{ m,\, m',\, q_{2}+2\}.
\end{eqnarray*}
\medskip

\noindent \textbf{Case 6.} $\mathbf{f\in A}$ and $\mathbf{g\in A_{2}^{'}}$
\smallskip

\noindent\textbf{6(a).} Let $f=f_{1},\, g=h_{q_{2}-2}.$ We have
\begin{align*}
S(f_{1}, h_{q_{2}-2})&= x_{2}^{2}\cdot f_{1}+x_{3}\cdot h_{q_{2}-2}\\
&=x_{1}^{q_{2}-2}x_{4}^{2}\cdot g_{2}+x_{2}x_{4}^{2}\cdot g_{1}\quad \longrightarrow_{S} 0
\end{align*}
\noindent Therefore, the set
$$\mathbb{T}_{13}=\{\boldsymbol{\beta}'=(\beta'_{1},\ldots,\beta'_{i},\ldots, \beta'_{ 2q_{2}})\}$$
gives the Schreyer tuples, where
\begin{eqnarray*}
\beta'_{1} & = & -x_{2}^{2};\\
\beta'_{(q_{2}+1)} & = & x_{2}x_{4}^{2};\\
\beta'_{(q_{2}+2)} & = & x_{1}^{q_{2}-2}x_{4}^{2};\\
\beta'_{2q_{2}} & = & -x_{3};\\
\beta'_{i} & = & 0,\quad \text{for}\quad i\notin \{ 1,\, (q_{2}+1),\, (q_{2}+2),\,2q_{2}\},\, 1\leq i\leq 2q_{2}.
\end{eqnarray*}
\medskip

\noindent\textbf{6(b).} Let $f=f_{2},\, g=h_{q_{2}-2}.$ We have
\begin{align*}
S(f_{2}, h_{q_{2}-2})&= x_{2}^{2}\cdot f_{2}+x_{1}\cdot h_{q_{2}-2}\\
&=x_{4}^{3}\cdot g_{1}\quad \longrightarrow_{S} 0
\end{align*}
Therefore, the set 
$$\mathbb{T}_{14}=\{\boldsymbol{\gamma}'=(\gamma'_{1},\ldots,\gamma'_{i},\ldots, \gamma'_{ 2q_{2}})\}$$
gives the Schreyer tuples, where
\begin{eqnarray*}
\gamma'_{2} & = & -x_{2}^{2};\\
\gamma'_{(q_{2}+1)} & = & x_{4}^{3};\\
\gamma'_{(2q_{2})} & = & -x_{1};\\
\gamma'_{i} & = & 0,\quad \text{for}\quad  i\notin \{ 2,\,(q_{2}+1),\, (2q_{2})\},\, 1\leq i\leq 2q_{2}.
\end{eqnarray*}
\medskip

\noindent\textbf{6(c).} Let $f=f_{\mu}$ and $g=h_{m},\, (\mu,m)\neq(1,\, q_{2}-2)$ and 
$(\mu,m)\neq(2,\, q_{2}-2)$.
\medskip

\noindent\textbf{(i)} $\mu + m < q_{2}.$ We have
\begin{align*}
S(f_{\mu}, h_{m})&= x_{2}^{q_{2}-m}\cdot f_{\mu}+x_{1}^{\mu-1}x_{3}^{q_{2}-\mu-m}\cdot h_{m}\\
&=x_{4}^{q_{2}+1-m}\cdot f_{\mu+m}-x_{2}^{q_{2}-\mu-m}x_{4}^{\mu+1}\cdot (f_{q_{2}}-g_{1})\quad \longrightarrow_{S} 0
\end{align*}
\noindent Therefore, the set
\begin{align*}\mathbb{T}_{15}=\{{\boldsymbol{\beta}'_{\mu m}}=(\beta'_{(\mu m, 1)},\ldots,\beta'_{(\mu m,i)},\ldots, \beta'_{(\mu m, 2q_{2})})\mid & 1\leq \mu \leq q_{2}-1,\\
& 1\leq m< q_{2}-\mu \}
 \end{align*}
gives the Schreyer tuples, where
\begin{eqnarray*}
\beta'_{(\mu m, \mu)} & = & -x_{2}^{q_{2}-m};\\
\beta'_{(\mu m, \mu +m)} & = & x_{4}^{q_{2}+1-m};\\
\beta'_{(\mu m ,q_{2})} & = & -x_{2}^{q_{2}-m-\mu}x_{4}^{\mu+1};\\
\beta'_{(\mu m ,q_{2}+1)} & = & x_{2}^{q_{2}-m-\mu}x_{4}^{\mu+1};\\
\beta'_{(\mu m ,q_{2}+2+m)} & = & -x_{1}^{\mu-1}x_{3}^{q_{2}-\mu-m};\\
\beta'_{(m, i)} & = & 0,\quad \text{for}\quad  i\notin 
\{ \mu, \, \mu+m ,\,  q_{2},\, (q_{2}+1), \, (q_{2}+2+m)\}.
\end{eqnarray*}
\medskip
 
\noindent\textbf{(ii)} Let $ \mu + m=q_{2}.$ We have
\begin{align*}
S(f_{\mu}, h_{m})&= x_{2}^{q_{2}-m}\cdot f_{\mu}+x_{1}^{\mu-1}\cdot h_{m}\\
&=x_{4}^{\mu+1}\cdot g_{1}\quad \longrightarrow_{S} 0
\end{align*}
\noindent Therefore, the set
\begin{align*}
\mathbb{T}_{16}=\{{\boldsymbol{\gamma}'_{\mu m}}=(\gamma'_{(\mu m, 1)},\ldots,\gamma'_{(\mu m,i)},\ldots, \gamma'_{(\mu m, 2q_{2})})\mid  1\leq & \mu \leq q_{2}-1,\\
&  m = q_{2}-\mu \}
\end{align*}
gives us the Schreyer tuples, where
\begin{eqnarray*} 
\gamma'_{(\mu m, \mu)} & = & -x_{2}^{q_{2}-m};\\
\gamma'_{(\mu m ,q_{2}+1)}& = & x_{4}^{\mu+1};\\
\gamma'_{(\mu m ,2q_{2}+2-\mu)}& = & -x_{1}^{\mu-1};\\
\gamma'_{(\mu m, i)} & = & 0,\quad \text{for}\quad  
i\notin \{ \mu,\, (q_{2}+1), \, (2q_{2}+2-\mu)\}.
\end{eqnarray*}
\medskip

\noindent\textbf{(iii)} Let $\mu+m > q_{2}.$ We have
 \begin{align*}
S(f_{\mu}, h_{m})&= x_{2}^{q_{2}-m}x_{3}^{\mu+m-q_{2}}\cdot f_{\mu}+x_{1}^{\mu-1}\cdot h_{m}\\
&=x_{4}^{\mu +1}\cdot h_{\mu+m-q_{2}}+x_{1}^{\mu+m-q_{2}}x_{4}^{q_{2}+1-m}\cdot f_{q_{2}}\quad \longrightarrow_{S} 0
\end{align*}
\noindent Therefore, the set
\begin{align*}
\mathbb{T}_{17}=\{{\boldsymbol{\alpha}''_{\mu m}}=(\alpha''_{(\mu m, 1)},\ldots,\alpha''_{(\mu m,i)},\ldots, \alpha''_{(\mu m, 2q_{2})})\mid & 1\leq \mu \leq q_{2}-1,\\
& 1\leq m< q_{2}-\mu \}
\end{align*}
gives us the Schreyer tuples, where
\begin{eqnarray*} 
\alpha''_{(\mu m, \mu)} & = & -x_{2}^{q_{2}-m}x_{3}^{\mu+m-q_{2}};\\
\alpha''_{(\mu m ,q_{2})} & = & x_{1}^{\mu+m-q{2}}x_{4}^{q_{2}+1-m};\\
\alpha''_{(\mu m ,\mu+m+2)} & = & x_{4}^{\mu+1};\\
\alpha''_{(\mu m ,q_{2}+2+m)} & = & -x_{1}^{\mu-1};\\
\alpha''_{(m, i)} & = & 0,\quad \text{for}\quad  i\notin \{ \mu ,\, q_{2},\, (q_{2}+2+m), \, (\mu+m+2)\}.
\end{eqnarray*}

\noindent The set $\mathbb{D}=\cup_{i=1}^{17}\mathbb{T}_{i}$ gives us all the 
Schreyer tuples which form the generating set for the first syzygy module 
$\mathrm{Syz}(\mathfrak{p}(\mathbf{\underline{n}}))$. \end{proof}
\medskip

\begin{theorem}\label{second} 
Let $\mathbb{T}= \mathbb{T}_{1}\cup \mathbb{T}_{4}\cup\mathbb{T}_{7}\cup\mathbb{T}_{9}\cup \mathbb{T}_{10}\cup\mathbb{T}_{11}\cup \mathbb{T}_{13}\cup\mathbb{T}_{14}$. Let 
$M_{1}$ denote the first syzygy module $\mathrm{Syz}(\mathfrak{p}(\mathbf{\underline{n}}))$. 
Then, the set $\mathbb{\overline{T}}\subset M_{1}/\mathfrak{m}M_{1}$ is linearly independent 
over the field $R/\mathfrak{m}=k$, where $\mathfrak{m} = \langle x_{1}, x_{2}, x_{3}, x_{4}\rangle$. The set $\mathbb{T}$ is a minimal generating set for the first syzygy module and 
$\beta_{2}(\mathfrak{p}(\mathbf{\underline{n}}))= 4(q_{2}-1)$.

\end{theorem}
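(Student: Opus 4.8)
The plan is to prove linear independence of $\overline{\mathbb{T}}$ first, and only afterwards deduce minimal generation from the already-known value of $\beta_2$. The cardinality $\abs{\mathbb{T}} = 2(q_2-1) + 2(q_2-3) + 4 = 4(q_2-1)$ is immediate from the index ranges of the eight families. The engine of the whole argument is the containment $\mathfrak{m}M_1 \subseteq \mathfrak{m}^2 R^{2q_2}$, i.e. every coordinate of every element of $\mathfrak{m}M_1$ lies in $\mathfrak{m}^2$. To establish it I would use that $\mathfrak{p}(\mathbf{\underline{n}})$ is homogeneous for the grading $\deg x_i = n_i$, so that $M_1$ is generated by $\mathbf{\underline{n}}$-homogeneous syzygies; a homogeneous syzygy with a nonzero constant in coordinate $j$ would exhibit the $j$-th element of $S$ as an $R$-combination of the remaining ones, contradicting the minimality of $S$ proved in \ref{mainthm}(i). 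Hence every coordinate of every element of $M_1$ lies in $\mathfrak{m}$, and multiplication by $\mathfrak{m}$ places every coordinate of $\mathfrak{m}M_1$ into $\mathfrak{m}^2$.

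Next I would introduce the $R$-linear map $\phi\colon M_1 \to (\mathfrak{m}/\mathfrak{m}^2)^{2q_2}$ obtained by reducing each coordinate modulo $\mathfrak{m}^2$. By the containment above, $\phi$ kills $\mathfrak{m}M_1$ and induces $\overline{\phi}\colon M_1/\mathfrak{m}M_1 \to (\mathfrak{m}/\mathfrak{m}^2)^{2q_2}$, so it suffices to show that $\phi(\mathbb{T})$ is $k$-linearly independent. Since every coordinate of every tuple in $\mathbb{T}$ is, by inspection of the formulas above, a single monomial, $\phi(\mathbf{t})$ records exactly the slots in which $\mathbf{t}$ carries a single variable (a standard-degree-one entry). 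Writing $\sum_{\mathbf{t}} c_{\mathbf{t}}\phi(\mathbf{t}) = 0$ and reading off these slots produces a linear system in the $c_{\mathbf{t}}$ which I claim is triangular. Using slots $1,\dots,q_2$ (the positions of $f_1,\dots,f_{q_2}$): the $x_3$-entries kill all $c_{\boldsymbol{\beta}_\mu}$, the $x_4$-entries kill all $c_{\boldsymbol{\alpha}_\mu}$, the $x_1$-entry in slot $q_2$ kills $c_{\boldsymbol{\alpha}'}$, and the $x_1$-equation in slot $q_2-1$ then kills $c_{\boldsymbol{\gamma}}$. Using slots $q_2+3,\dots,2q_2$ (the positions of $h_1,\dots,h_{q_2-2}$): the $x_4$-entries kill all $c_{\boldsymbol{\beta}'_m}$, the $x_3$-entries kill all $c_{\boldsymbol{\gamma}'_m}$ and $c_{\boldsymbol{\beta}'}$, and the $x_1$-entry in slot $2q_2$ kills $c_{\boldsymbol{\gamma}'}$. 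Thus every coefficient vanishes and $\overline{\mathbb{T}}$ is $k$-linearly independent.

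Independence gives $\beta_2(\mathfrak{p}(\mathbf{\underline{n}})) = \dim_k M_1/\mathfrak{m}M_1 \ge \abs{\mathbb{T}} = 4(q_2-1)$. To upgrade this to an equality and to minimal generation I would invoke the value $\beta_2 = 4(q_2-1)$ computed in \cite{stamate}: then $\overline{\mathbb{T}}$ is an independent set whose size equals $\dim_k M_1/\mathfrak{m}M_1$, hence a $k$-basis of $M_1/\mathfrak{m}M_1$, and by Nakayama's lemma any lift of a basis is a minimal generating set. Therefore $\mathbb{T}$ minimally generates $\mathrm{Syz}(\mathfrak{p}(\mathbf{\underline{n}}))$ and $\beta_2(\mathfrak{p}(\mathbf{\underline{n}})) = 4(q_2-1)$.

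I expect the main difficulty to be twofold. First, the containment $\mathfrak{m}M_1 \subseteq \mathfrak{m}^2 R^{2q_2}$ is precisely what makes the single-variable test meaningful, and it must be argued carefully from minimality and homogeneity rather than taken for granted. Second, verifying that the degree-one system is genuinely triangular is a bookkeeping task: at each pivot slot one must check that no other retained tuple contributes the same variable in that slot. The delicate positions are $q_2+1$, $q_2+2$ and $2q_2$, where several families overlap; there one uses that the exponents $n_1,n_2,n_3,n_4$ are pairwise distinct, so that the variable forced into a given slot is determined by the $\mathbf{\underline{n}}$-degree, and in particular the $x_1$-, $x_2$- and $x_3$-contributions in slot $2q_2$ separate $\boldsymbol{\gamma}'$, $\boldsymbol{\gamma}'_{q_2-3}$ and $\boldsymbol{\beta}'$. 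Because independence only supplies the lower bound, the appeal to \cite{stamate} is confined to furnishing the matching upper bound; the alternative of showing the nine discarded families $\mathbb{T}_2,\mathbb{T}_3,\mathbb{T}_5,\mathbb{T}_6,\mathbb{T}_8,\mathbb{T}_{12},\mathbb{T}_{15},\mathbb{T}_{16},\mathbb{T}_{17}$ redundant modulo $\mathfrak{m}M_1 + \langle \mathbb{T}\rangle$ would give that bound internally, but at the cost of many explicit reductions.
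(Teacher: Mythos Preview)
Your proposal is correct and follows essentially the same route as the paper: establish that every coordinate of every element of $\mathfrak{m}M_1$ has standard degree at least two, then read off the degree-one entries slot by slot in a triangular order to force all coefficients to vanish, and finally invoke \cite{stamate} for the matching upper bound $\beta_2 = 4(q_2-1)$ so that Nakayama turns independence into minimal generation. The only minor difference is in justifying the containment $\mathfrak{m}M_1 \subseteq \mathfrak{m}^2 R^{2q_2}$: the paper points to the explicit Schreyer generators $\mathbb{D}$ (Theorem~\ref{gbasis}(ii) already records that every entry is zero or nonconstant), whereas you argue it abstractly from the minimality of $S$ and the $\mathbf{\underline{n}}$-grading---both are valid, and the coordinate-by-coordinate triangular elimination you outline (using the $x_3$-, $x_4$-, then $x_1$-entries in slots $1,\dots,q_2$ and $q_2+3,\dots,2q_2$) matches the paper's computation.
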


\begin{proof}
The module $M_{1}$ is generated by $\mathbb{D}$, therefore $\mathfrak{m}M_{1}$ 
must be generated by 
$L=x_{1}\mathbb{D}\cup x_{2}\mathbb{D}\cup x_{3}\mathbb{D}\cup x_{4}\mathbb{D}$. 
It follows from the construction of $\mathbb{D}$ that each coordinate of elements 
of $L$ is either zero or a monomial of total degree greater than one. Now, to 
show that $\overline{\mathbb{T}}$ is linearly independent in the vector space 
$M_{1}/\mathfrak{m}M_{1}$ over the field $R/\mathfrak{m}=k$, we consider the 
element $\mathbf{v}\in \mathfrak{m}M_{1}$ given by
\begin{align*}
\mathbf{v} & = \sum_{\mu=1}^{q_{2}-1}a_{\mu}\boldsymbol{\beta}_{\mu}+\sum_{m=1}^{q_{2}-3}b_{m}\boldsymbol{\gamma}'_{m}+c_{1}\boldsymbol{\alpha'}+
\sum_{m=2}^{q_{2}-2}c_{m}\boldsymbol{\beta}'_{m}\\
{} & \quad\quad +\sum_{\mu=1}^{q_{2}-1}d_{\mu}\boldsymbol{\alpha_{\mu}}-l_{1}\boldsymbol{\gamma}+l_{2}\boldsymbol{\beta}'+l_{3}\boldsymbol{\gamma}' 
\end{align*}
where $a_{\mu}, b_{m},c_{m},d_{\mu},l_{m}\in k$. The 
linear part in the first coordinate $v_{1}$ is $a_{1}(-x_{1})+d_{1}(-x_{4})$. 
Each coordinate of elements of $L$ being a monomial of total degree 
greater than one or zero, we have $a_{1}(-x_{1})+d_{1}(-x_{4})=0$, that is 
$a_{1}=d_{1}=0$. Now, the linear part in the $i$-th coordinate 
$v_{i}$, for $1<i< q_{2}-1$ is $a_{i-1}(x_{3})+a_{i}(-x_{1})+d_{i-1}(x_{2})+d_{i}(-x_{4})$. 
Therefore by the same argument we must have $a_{i}=d_{i}=0$, for $1\leq i< q_{2}-1$. 
Similarly, the linear part in $v_{q_{2}+1}$ is $c_{1}(x_{1})+l_{1}(-x_{3})$. 
Therefore, for similar reasons we must have $c_{1}=l_{1}=0$. 
We now compute linear part in $v_{q_{2}-1}$ and obtain 
$a_{q_{2}-1}(-x_{1})+d_{q_{2}-1}(-x_{4})$. Equating this 
to zero we obtain $a_{q_{2}-1}=d_{q_{2}-1}=0$. It turns out that, 
$\mathbf{v}=\sum_{m=1}^{q_{2}-3}b_{m}\boldsymbol{\gamma}'_{m}+\sum_{m=2}^{q_{2}-2}c_{m}\boldsymbol{\beta}'_{m}+l_{2}\boldsymbol{\beta'}+l_{3}\boldsymbol{\gamma'}$. 
\medskip

We now compute the linear part in $v_{q_{2}+3}$ and obtain $b_{1}x_{3}+c_{2}(-x_{1})$. 
Equating this to zero we get $b_{1}=c_{2}=0$. The linear part in $v_{q_{2}+2+i}$, 
for $1<i\leq q_{2}-3$, is $\, b_{i-1}(-x_{2})+b_{i}x_{3}+c_{i}x_{4}+c_{i+1}(-x_{1})$. 
Therefore, $b_{i}=c_{i}=0$, \, for $1<i<q_{2}-2$. Finally computing the linear part 
in $v_{2q_{2}}$ and equating that to zero we obtain 
$$b_{q_{2}-3}(-x_{2})+c_{q_{2}-2}x_{4}+l_{2}(-x_{3})+l_{3}(-x_{1})=0.$$ 
Therefore, $b_{q_{2}-3}=c_{q_{2}-2}=l_{2}=l_{3}=0$. This proves that 
the set $\overline{\mathbb{T}}\subset M_{1}/\mathfrak{m}M_{1}$ is linearly independent. 
Therefore, $\mathbb{T}$ is a part of a minimal generating set 
for the first syzygies module, and cardinality of $\mathbb{T}$ is 
$$(q_{2}-1)+(q_{2}-3)+(q_{2}-2)+(q_{2}-1)+3=4(q_{2}-1).$$ 
By Theorem 8.1 \cite{stamate}, the second betti number in the resolution of 
$R/\mathfrak{p}(\mathbf{\underline{n}})$ is  $4(q_{2}-1)$, hence  $\mathbb{T}$ is a minimal generating set for first syzygy module. Therefore, 
$\beta_{2}(\mathfrak{p}(\mathbf{\underline{n}}))= \mid \mathbb{T} \mid = 4(q_{2}-1)$.
\end{proof}

\begin{theorem}\label{third}
$\beta_{3}(\mathfrak{p}(\mathbf{\underline{n}}))= 2q_{2}-3$.
\end{theorem}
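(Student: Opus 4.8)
The plan is to avoid recomputing the second syzygy module from scratch and instead to pin down $\beta_{3}$ by a rank (Euler characteristic) argument, feeding in the values of $\beta_{1}$ and $\beta_{2}$ already established in parts (iii) and (iv) of Theorem \ref{mainthm}. The starting point is that $R/\mathfrak{p}(\mathbf{\underline{n}})$ is the coordinate ring of a monomial curve, hence a one-dimensional domain and therefore Cohen--Macaulay. Thus $\operatorname{depth}(R/\mathfrak{p}(\mathbf{\underline{n}})) = \dim(R/\mathfrak{p}(\mathbf{\underline{n}})) = 1$, and the height of $\mathfrak{p}(\mathbf{\underline{n}})$ is $4-1 = 3$.

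First I would invoke the Auslander--Buchsbaum formula,
$$\operatorname{pd}_{R}(R/\mathfrak{p}(\mathbf{\underline{n}})) + \operatorname{depth}(R/\mathfrak{p}(\mathbf{\underline{n}})) = \operatorname{depth}(R) = 4,$$
to conclude that the projective dimension equals exactly $3$. Consequently the minimal free resolution terminates one step after the term computed in part (iv) and has the shape
$$0 \longrightarrow R^{\beta_{3}} \longrightarrow R^{\beta_{2}} \longrightarrow R^{\beta_{1}} \longrightarrow R \longrightarrow R/\mathfrak{p}(\mathbf{\underline{n}}) \longrightarrow 0,$$
with no further nonzero free module to its left. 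Next I would use the standard fact that in any finite free resolution the alternating sum of the ranks of the free modules equals the rank of the resolved module. Since $R/\mathfrak{p}(\mathbf{\underline{n}})$ is a torsion $R$-module (its dimension $1$ is strictly less than $\dim R = 4$), its rank is $0$, so
$$1 - \beta_{1} + \beta_{2} - \beta_{3} = 0.$$
Substituting $\beta_{1} = 2q_{2}$ and $\beta_{2} = 4(q_{2}-1)$ yields $\beta_{3} = 1 - 2q_{2} + 4(q_{2}-1) = 2q_{2}-3$, as claimed. The computation is self-consistent: because $\beta_{2} = 4q_{2}-4$ strictly exceeds $\beta_{1}-1 = 2q_{2}-1$, the second syzygy module cannot vanish, which independently re-confirms that the projective dimension is genuinely $3$.

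The only real subtlety is establishing that the projective dimension is \emph{exactly} $3$ rather than merely at most $3$; this is precisely what the Cohen--Macaulay property supplies, and it is the single external input the argument relies on. A more laborious alternative --- which additionally produces the explicit matrix $P$ of part (vi) --- is to compute $\ker N$ directly: one exhibits $2q_{2}-3$ second syzygies among the generators $\mathbb{T}$ of $M_{1}$ and verifies their minimality by the same linear-independence-modulo-$\mathfrak{m}$ technique used in the proof of Theorem \ref{second}. The rank count above guarantees in advance that exactly $2q_{2}-3$ generators are needed, so this explicit verification, although combinatorially involved, reduces to producing a generating set of the predicted size and checking that it is minimal.
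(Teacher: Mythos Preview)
Your argument is correct and matches the paper's proof essentially line for line: both observe that $R/\mathfrak{p}(\mathbf{\underline{n}})$ is a one-dimensional domain with depth $1$, invoke Auslander--Buchsbaum to get projective dimension $3$, and then read off $\beta_{3}$ from the alternating-sum identity $1-\beta_{1}+\beta_{2}-\beta_{3}=0$. Your additional remarks on the rank/torsion justification and the alternative explicit-syzygy route are sound but not part of the paper's own (terser) proof.
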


\begin{proof}
We know that $R/\mathfrak{p}(\mathbf{\underline{n}})\cong k[t^{n_{1}},t^{n_{2}},t^{n_{3}},t^{n_{4}}]$ is a one dimensional 
integral domain and therefore $\mathrm{depth}(R/\mathfrak{p}(\mathbf{\underline{n}}))=1$. 
By the Auslander Buchsbaum theorem, 
$\mathrm{projdim}_{R}(R/\mathfrak{p}(\mathbf{\underline{n}}))=3$ and we 
have 
$$1-2q_{2}+\beta_{2}(\mathfrak{p}(\mathbf{\underline{n}}))-\beta_{3}(\mathfrak{p}(\mathbf{\underline{n}}))=0.$$ 
Therefore, by \ref{second}, we have 
$\beta_{3}(\mathfrak{p}(\mathbf{\underline{n}}))= 2q_{2}-3$.\end{proof}
\medskip

\section{The second syzygy and a minimal free resolution}
Let us order the generating vectors of second syzygy, and consider the matrix $$N=\left[\boldsymbol{\beta}_{1}\ldots \boldsymbol{\beta}_{q_{2}-1} \mid \boldsymbol{\gamma}^{'}_{1}\ldots\boldsymbol{\gamma}^{'}_{q_{2}-3}\mid \boldsymbol{\alpha}^{'}\mid \boldsymbol{\beta}^{'}_{2}\ldots \boldsymbol{\beta}^{'}_{q_{2}-2}\mid\boldsymbol{\alpha}_{1}\ldots \boldsymbol{\alpha}_{q_{2}-1}  \mid -\boldsymbol{\gamma}\mid \boldsymbol{\beta}^{'}\mid \boldsymbol{\gamma}^{'} \right]_{2q_{2}\times 4(q_{2}-1)}$$

\noindent We consider the following sets of vectors: 
\begin{enumerate}
\item [(i)] $\mathbb{H}_{1}=\{\boldsymbol{\delta}_{\mu}=(\delta_{(\mu,1)},\ldots \delta_{(\mu,4(q_{2}-1))})\mid 1\leq \mu\leq q_{2}-2\} $, where
\begin{eqnarray*} 
\delta_{(\mu,\mu)} & = & x_{4};\\
\delta_{(\mu,\mu+1)} & = &-x_{2};\\ 
\delta_{(\mu,(3q_{2}-6+\mu))} & = & -x_{1};\\ 
\delta_{(\mu,(3q_{2}-5+\mu))} & = & x_{3};\\ 
\delta_{(\mu,i)} & = & 0, \quad \text{for}\quad  i\notin \{\mu,\mu+1,(3q_{2}-6+\mu),(3q_{2}-5+\mu)\}.
\end{eqnarray*}
\item [(ii)] $\mathbb{H}_{2}=\{\boldsymbol{\xi}=(\xi_{1},\ldots,\xi_{4(q_{2}-1)})\}$, where
\begin{eqnarray*} 
\xi_{1} & = & x_{2}^{2};\\
\xi_{2q_{2}-3} & = & x_{2}x_{4}^{2};\\ 
\xi_{4q_{2}-7} & = & x_{1}x_{4}^{2};\\ 
\xi_{4q_{2}-6} & = & x_{4}^{3};\\
\xi_{4q_{2}-5} & = & -x_{1};\\ 
\xi_{4q_{2}-4} & = & x_{3};\\
\xi_{i} & = & 0, \quad \text{for}\quad  i\notin \{1,(2q_{2}-3),(4q_{2}-7),(4q_{2}-6),(4q_{2}-5),(4q_{2}-4)\}.
\end{eqnarray*}
\item [(iii)] $\mathbb{H}_{3}=\{\boldsymbol{\zeta}=(\zeta_{1},\ldots,\zeta_{4(q_{2}-1)})\}$, where
\begin{eqnarray*} 
\zeta_{q_{2}-1} & = & x_{1};\\
\zeta_{q_{2}} & = & x_{4};\\ 
\zeta_{2q_{2}-3} & = & x_{3};\\ 
\zeta_{2q_{2}-2} & = & x_{2};\\
\zeta_{4q_{2}-6} & = & x_{1};\\ 
\zeta_{i} & = & 0, \quad \text{for}\quad  i\notin \{(q_{2}-1),q_{2},(2q_{2}-3),(2q_{2}-2),(4q_{2}-6)\}.
\end{eqnarray*}
\item [(iv)] $\mathbb{H}_{3}=\{\boldsymbol{\eta}=(\eta_{1},\ldots,\eta_{4(q_{2}-1)})\}$, where
\begin{eqnarray*} 
\eta_{2q_{2}-4} & = & -x_{1};\\
\eta_{3q_{2}-6} & = & -x_{3};\\ 
\eta_{3q_{2}-5} & = & x_{2}^{2};\\ 
\eta_{4q_{2}-5} & = & -x_{4};\\
\eta_{4q_{2}-4} & = & x_{2};\\ 
\eta_{i} & = & 0, \quad \text{for}\quad  i\notin \{(2q_{2}-4),(3q_{2}-6),(3q_{2}-5),(4q_{2}-5),(4q_{2}-4)\}.
\end{eqnarray*}
\item [(v)] $\mathbb{H}_{4}=\{\boldsymbol{\kappa}_{\mu}=(\kappa_{(\mu,1)},\ldots,\kappa_{(\mu,4(q_{2}-1))})\mid 1\leq \mu\leq q_{2}-4\}$, where
\begin{eqnarray*} 
\eta_{(\mu,(q_{2}-1+\mu))} & = & -x_{1};\\
\eta_{(\mu,q_{2}+\mu)} & = & x_{4};\\ 
\eta_{(\mu,(2q_{2}-3+\mu))} & = & -x_{3};\\ 
\eta_{(\mu,(2q_{2}-2+\mu))} & = & x_{2};\\
\eta_{(\mu,i)} & = & 0, \quad \text{for}\quad  i\notin \{(q_{2}-1+\mu),(q_{2}+\mu),(2q_{2}-3+\mu),(2q_{2}-2+\mu)\}.
\end{eqnarray*}

\end{enumerate}
\medskip

\begin{theorem}\label{secondsyzygy}
The set $\mathbb{H}=\mathbb{H}_{1}\cup\mathbb{H}_{2}\cup\mathbb{H}_{3}\cup\mathbb{H}_{4}$ 
is a minimal generating set of the second syzygy module of 
$\mathfrak{p}(\mathbf{\underline{n}})$.
\end{theorem}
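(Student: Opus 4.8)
The plan is to establish the statement in three stages: (1) every vector in $\mathbb{H}$ is a genuine second syzygy, i.e. lies in $M_{2}:=\ker N$; (2) the classes of $\mathbb{H}$ are $k$-linearly independent in $M_{2}/\mathfrak{m}M_{2}$; and (3) these two facts together with the value $\beta_{3}=2q_{2}-3$ obtained in \ref{third} force $\mathbb{H}$ to be a minimal generating set. The point of this route is that generation is never verified by hand; it falls out of Nakayama's lemma once the cardinalities are matched.

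For stage (1) I would check, family by family, that the prescribed combination of the columns of $N$ (that is, of the first syzygies tabulated in \ref{gbasis}) vanishes. With the column order of $N$ fixed as above, the support of $\boldsymbol{\delta}_{\mu}$ records the relation $x_{4}\boldsymbol{\beta}_{\mu}-x_{2}\boldsymbol{\beta}_{\mu+1}-x_{1}\boldsymbol{\alpha}_{\mu}+x_{3}\boldsymbol{\alpha}_{\mu+1}=0$, which collapses coordinate by coordinate in $R^{2q_{2}}$ after the evident monomial cancellations (for instance $x_{1}x_{4}$ in slot $\mu$, and $x_{3}x_{4},\,x_{1}x_{2}$ in slot $\mu+1$). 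The vectors $\boldsymbol{\xi},\boldsymbol{\zeta},\boldsymbol{\eta}$ and the $\boldsymbol{\kappa}_{\mu}$ are dispatched by the same kind of direct cancellation. This is precisely the verification $NP=0$ and places $\mathbb{H}\subseteq M_{2}$.

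The engine of stage (2) is the minimality of $\mathbb{T}$ from \ref{second}. Tensoring the exact sequence $0\to M_{2}\to R^{4(q_{2}-1)}\xrightarrow{N}M_{1}\to 0$ with $k=R/\mathfrak{m}$ and using that the columns of $N$ map to a $k$-basis of $M_{1}/\mathfrak{m}M_{1}$, one gets that $M_{2}=\ker N\subseteq\mathfrak{m}R^{4(q_{2}-1)}$; consequently every entry of every element of $M_{2}$ lies in $\mathfrak{m}$, and every entry of every element of $\mathfrak{m}M_{2}$ lies in $\mathfrak{m}^{2}$ and so has vanishing linear part. I would then take a scalar combination $\mathbf{w}$ of the vectors of $\mathbb{H}$, assume $\mathbf{w}\in\mathfrak{m}M_{2}$, and annihilate the scalars one at a time by inspecting the linear part of well-chosen coordinates, exactly as in \ref{second}: coordinate $1$ meets only the entry $x_{4}$ of $\boldsymbol{\delta}_{1}$ and kills its coefficient, coordinates $2,\ldots,q_{2}-1$ then peel off $\boldsymbol{\delta}_{2},\ldots,\boldsymbol{\delta}_{q_{2}-2}$ in turn, and analogous chains running along the coordinates carrying the $\boldsymbol{\kappa}_{\mu}$ (those indexed by $q_{2}-1+\mu$) and along the exceptional slots supporting $\boldsymbol{\xi},\boldsymbol{\zeta},\boldsymbol{\eta}$ dispose of the remaining families. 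At each step two distinct variables occupying the same slot are compared separately, and it is this that decouples the unknowns. Hence $\overline{\mathbb{H}}$ is linearly independent in $M_{2}/\mathfrak{m}M_{2}$.

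Finally, counting gives $\abs{\mathbb{H}}=(q_{2}-2)+1+2+(q_{2}-4)=2q_{2}-3$, which by \ref{third} equals $\beta_{3}(\mathfrak{p}(\mathbf{\underline{n}}))=\dim_{k}M_{2}/\mathfrak{m}M_{2}$. Thus the linearly independent family $\overline{\mathbb{H}}$ is a $k$-basis of $M_{2}/\mathfrak{m}M_{2}$, so Nakayama's lemma gives that $\mathbb{H}$ generates $M_{2}$, and a generating set of cardinality $\beta_{3}$ is automatically minimal. I expect the only genuine difficulty to be bookkeeping rather than ideas: in stage (1), aligning the support indices of each $\mathbb{H}$-vector with the correct column labels of $N$ (the blocks $\boldsymbol{\beta}_{\ast},\boldsymbol{\gamma}'_{\ast},\boldsymbol{\beta}'_{\ast},\boldsymbol{\alpha}_{\ast}$ and the four singleton columns), and in stage (2), selecting for each of the five families a coordinate whose linear part isolates a single scalar. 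The homological content is supplied entirely by the minimality of $\mathbb{T}$ and the prior evaluation of $\beta_{3}$.
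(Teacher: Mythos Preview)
Your proposal is correct and follows essentially the same three-step route as the paper: verify $NP=0$, show the images in $M_{2}/\mathfrak{m}M_{2}$ are $k$-independent by isolating linear parts coordinate by coordinate, and conclude minimality from the prior computation of $\beta_{3}$. Your explicit justification that $M_{2}\subseteq\mathfrak{m}R^{4(q_{2}-1)}$ (via minimality of $\mathbb{T}$), hence that elements of $\mathfrak{m}M_{2}$ have no linear part, is a welcome clarification of a step the paper leaves implicit when it says ``proceed in the same way as in \ref{second}.''
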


\proof Let us define the matrix,
$$P=\left[\boldsymbol{\delta}_{1}\ldots \boldsymbol{\delta}_{q_{2}-2}\mid \boldsymbol{\xi}\mid \boldsymbol{\zeta}\mid \boldsymbol{\eta}\mid \boldsymbol{\kappa}_{1}\ldots \boldsymbol{\kappa}_{q_{2}-4} \right]_{4(q_{2}-1)\times (2q_{2}-3)}.$$

The matrix $$N=\left[\boldsymbol{\beta}_{1}\ldots \boldsymbol{\beta}_{q_{2}-1} \mid \boldsymbol{\gamma}^{'}_{1}\ldots\boldsymbol{\gamma}^{'}_{q_{2}-3}\mid \boldsymbol{\alpha}^{'}\mid \boldsymbol{\beta}^{'}_{2}\ldots \boldsymbol{\beta}^{'}_{q_{2}-2}\mid\boldsymbol{\alpha}_{1}\ldots \boldsymbol{\alpha}_{q_{2}-1}  \mid -\boldsymbol{\gamma}\mid \boldsymbol{\beta}^{'}\mid \boldsymbol{\gamma}^{'} \right]_{2q_{2}\times 4(q_{2}-1)}$$ is the one which has been defined at the beginning of this section. It is easy to check that $N\cdot P=0$. Therefore, elements of $\mathbb{H}$ are elements of the second syzygy module. Let $ M_{2}$ denote the second syzygy 
module of $\mathfrak{p}(\mathbf{\underline{n}})$. We claim that, 
$\overline{\mathbb{H}}\subset M_{2}/\mathfrak{m}M_{2}$ is a linearly independent 
set, where $\mathfrak{m}=\langle x_{1},x_{2},x_{3},x_{4}\rangle$. We proceed 
in the same way as in \ref{second}, considering the expression 
$$ \sum_{\mu=1}^{q_{2}-2}p_{\mu}\boldsymbol{\delta}_{\mu}+q\boldsymbol{\xi}+w \boldsymbol{\zeta}+s\boldsymbol{\eta}+\sum_{j=1}^{q_{2}-4}t_{j}\boldsymbol{\kappa}_{j}=\mathbf{u}\,(\mathrm{say}),$$ where $p_{\mu},q,w,s,t_{j}\in k$ for $1\leq \mu\leq q_{2}-2$,  $ 1\leq j\leq q_{2}-4$ and we compute linear terms in each coordinate of $\mathbf{u}$. If we compute the linear terms 
in $u_{1}$ we get $p_{1}x_{4}$, hence $p_{1}=0$. Next we compute the linear terms in 
$u_{2}$ and we get $-p_{1}x_{2}+p_{2}x_{4}$. We have $-p_{1}x_{2}+p_{2}x_{4}=0$, hence 
$p_{2}=0$. Proceeding like this, we observe that all the coefficients are zero and 
therefore $\mathbb{H}$ is a part of minimal generating set of $M_{2}$. Since the third 
betti number is $2q_{2}-3$ and $\mid\mathbb{H}\mid =2q_{2}-3$, we conclude that 
$\mathbb{H}$ is a minimal generating set of the second syzygy module. \qed
\medskip

\begin{corollary}\label{minres}
A minimal free resolution for the ideal $\mathfrak{p}(\mathbf{\underline{n}})$ in 
the polynomial ring $R = K[x_{1}, x_{2}, x_{3}, x_{4}]$ is 
$$0\longrightarrow R^{2q_{2}-3} \stackrel{P}{\longrightarrow} R^{4(q_{2} - 1)} \stackrel{N}{\longrightarrow} R^{2q_{2}} \longrightarrow R \longrightarrow R/\mathfrak{p}(\mathbf{\underline{n}})\longrightarrow 0,$$
where 
$$P=\left[\boldsymbol{\delta}_{1}\ldots \boldsymbol{\delta}_{q_{2}-2}\mid \boldsymbol{\xi}\mid \boldsymbol{\zeta}\mid \boldsymbol{\eta}\mid \boldsymbol{\kappa}_{1}\ldots \boldsymbol{\kappa}_{q_{2}-4} \right]_{4(q_{2}-1)\times (2q_{2}-3)}$$ 
$$N=\left[\boldsymbol{\beta}_{1}\ldots \boldsymbol{\beta}_{q_{2}-1} \mid \boldsymbol{\gamma}^{'}_{1}\ldots\boldsymbol{\gamma}^{'}_{q_{2}-3}\mid \boldsymbol{\alpha}^{'}\mid \boldsymbol{\beta}^{'}_{2}\ldots \boldsymbol{\beta}^{'}_{q_{2}-2}\mid\boldsymbol{\alpha}_{1}\ldots \boldsymbol{\alpha}_{q_{2}-1}  \mid -\boldsymbol{\gamma}\mid \boldsymbol{\beta}^{'}\mid \boldsymbol{\gamma}^{'} \right]_{2q_{2}\times 4(q_{2}-1)}.$$
\end{corollary}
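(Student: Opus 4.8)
The plan is to assemble the displayed complex from the minimal generating data already produced in Theorems \ref{second} and \ref{secondsyzygy}, and then to verify exactness stage by stage together with minimality of every differential. The map $R^{2q_{2}}\to R$ is the row of the $2q_{2}$ binomials in the minimal generating set $S$, so its image is $\mathfrak{p}(\mathbf{\underline{n}})$ and the complex is exact at $R$ and at $R/\mathfrak{p}(\mathbf{\underline{n}})$ by definition. By construction the columns of $N$ are the Schreyer tuples $\mathbb{T}$, each of which is a syzygy on $S$, so the composite $R^{4(q_{2}-1)}\xrightarrow{N}R^{2q_{2}}\to R$ vanishes; likewise $N\cdot P=0$ was checked in the proof of Theorem \ref{secondsyzygy}. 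Hence the sequence is indeed a complex.

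For exactness at $R^{2q_{2}}$ I would use that $\ker(R^{2q_{2}}\to R)$ is exactly the first syzygy module $M_{1}$, which by Theorem \ref{second} is minimally generated by $\mathbb{T}$; since the columns of $N$ are the elements of $\mathbb{T}$, we get $\operatorname{im}(N)=M_{1}=\ker(R^{2q_{2}}\to R)$. For exactness at $R^{4(q_{2}-1)}$, the kernel of $N$ is the second syzygy module $M_{2}$, minimally generated by $\mathbb{H}$ by Theorem \ref{secondsyzygy}; since the columns of $P$ are the elements of $\mathbb{H}$ and the identity $N\cdot P=0$ already gives $\operatorname{im}(P)\subseteq\ker(N)=M_{2}$, the generation statement promotes this inclusion to the equality $\operatorname{im}(P)=\ker(N)$.

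It remains to prove exactness at $R^{2q_{2}-3}$, i.e.\ that $P$ is injective, and this is the step I expect to be the main obstacle, since we do not know a priori that $M_{2}$ is free. I would argue as follows. The explicit entries recorded for $N$ (by Theorem \ref{gbasis}(ii) every entry of a Schreyer tuple is non-constant or zero) and for $P$ (each entry of the vectors in $\mathbb{H}_{1},\ldots,\mathbb{H}_{4}$ is $0$ or a monomial of positive degree) show that all entries of $N$ and $P$ lie in $\mathfrak{m}=\langle x_{1},x_{2},x_{3},x_{4}\rangle$. Therefore all three differentials are minimal, so the ranks $2q_{2}$, $4(q_{2}-1)$, $2q_{2}-3$ of the free modules equal the Betti numbers $\beta_{1},\beta_{2},\beta_{3}$ computed earlier. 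By the Auslander--Buchsbaum formula invoked in Theorem \ref{third}, $\operatorname{projdim}_{R}(R/\mathfrak{p}(\mathbf{\underline{n}}))=3$, so the minimal free resolution terminates at homological degree $3$. Consequently the kernel of the minimal map $P$ must vanish, for otherwise the minimal resolution would extend to a nonzero fourth free module, forcing $\beta_{4}\neq 0$ and $\operatorname{projdim}\geq 4$. Hence $P$ is injective and the complex is exact throughout.

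Putting the pieces together, the displayed sequence is an exact complex of free modules whose differentials all have entries in $\mathfrak{m}$, which is precisely the assertion that it is a minimal free resolution of $R/\mathfrak{p}(\mathbf{\underline{n}})$ with the matrices $N$ and $P$ as stated. The only genuinely nontrivial input beyond the bookkeeping is the termination argument for $P$; should one wish to avoid citing the projective dimension, an alternative would be to verify the Buchsbaum--Eisenbud acyclicity criterion by checking that the ranks of $N$ and $P$ add up correctly and that the ideals of maximal minors have the required depth, but the Auslander--Buchsbaum route is shorter given Theorem \ref{third}.
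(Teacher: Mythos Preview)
Your proposal is correct and is exactly the argument the paper leaves implicit: the corollary is stated without proof, as it follows immediately from Theorems~\ref{second}, \ref{third}, and \ref{secondsyzygy} together with the Auslander--Buchsbaum computation of the projective dimension. Your careful unpacking of exactness at each spot, minimality via the entries lying in $\mathfrak{m}$, and the termination argument for the injectivity of $P$ are precisely the steps one needs to spell out, and they match the paper's intended route.
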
 
\bigskip

\section*{Acknowledgement}
The third author thanks IIT Gandhinagar for the funding through 
the research project IP/IITGN/MATH/IS/201415-13. The second author 
thanks SERB, Government of India for the post-doctoral fellowship, 
under the research project EMR/2015/000776.
\bigskip

\bibliographystyle{amsalpha}

\end{document}